\numberwithin{equation}{section}
 \theoremstyle{plain}
\newtheorem{thm}{Theorem}[section]
\newtheorem{lem}[thm]{Lemma}
\theoremstyle{definition}
\theoremstyle{remark}
\begin{document}

\title[Sharp Estimates on Coefficient functionals 
of Ozaki close-to-convex functions]{ Sharp Estimates on Coefficient functionals 
of Ozaki close-to-convex functions}

\author[Sushil Kumar]{Sushil Kumar}
\address{Bharati Vidyapeeth's College of Engineering,  Delhi--110063, India}
\email{sushilkumar16n@gmail.com}

\author[Rakesh Kumar pandey]{Rakesh Kumar pandey}
\address{Department of Mathematics, University of Delhi, New Delhi-110063}
\email{rkpandey@maths.du.ac.in}

\author[Pratima Rai]{Pratima Rai}
\address{Department of Mathematical science university of Delhi, New Delhi}
\email{pratimarai5@gmail.com}

\textwidth=455pt \evensidemargin=8pt \oddsidemargin=8pt
\marginparsep=8pt \marginparpush=8pt \textheight=600pt
\topmargin=15pt
\parskip .3cm
\numberwithin{equation}{section}

\begin{abstract}
The goal of this manuscript to establish the best possible estimate on  coefficient functionals like Hermitian-Toeplitz determinant of secoend order involving  logarithmic coefficients,  initial logarithmic inverse coefficients and initial order Schwarzian derivatives of the Ozaki close-to-convex functions. 
\end{abstract}

\subjclass[2010]{30C45, 30C50}
\keywords{Ozaki close-to-convex functions; logarithmic inverse
coefficients;  logarithmic coefficients; Hermitian-Toeplitz  determinants; Schwarzian derivatives}

\maketitle


\section{Introduction}
Let  $\mathcal{A}$ denote  the class  of  functions $f(z)=z+\sum_{n=2}^{\infty} a_n z^n$ which are analytic in  $\mathbb{D}= \vert z\vert <1$ and $\mathcal{S}$ be a subclass of $\mathcal{A}$ consists of  univalent functions. 
Let $\mathcal{P}$ denote the class of function $p(z)=1+\sum_{n=1}^{\infty}p_nz^n$ which are analytic and satisfy $\Re(p(z))>0$ for $\vert z\vert <1$. The notation of $h_1 \prec h_2$ means the analytic function $h_1$ is subordinate to  analytic function $h_2$ if there is a Schwarz function $w$ in $\vert z\vert <1$ such that $h_1(z) =h_2(w(z))$.  If $h_2\in \mathcal{S}$,  then $h_1\prec h_2$ if and only if $h_1(0)=h_2(0)$ and $h_1(|z|<1) \subset h_2(|z|<1)$. It means the behaviour of the function $h_1$ is subordinate or constrained by the function $h_2$ under some mapping~\cite{Duren}.  In 1941, Ozaki \cite{Ozaki} introduced the class $\mathcal{F}$   as 
$$\mathcal{F}=\bigg \{f\in\mathcal{A}: \Re\bigg(1+\frac{zf''(z)}{f'(z)}\bigg)>-\frac{1}{2}, \quad \mbox{for}\,\,\,\vert z\vert <1\bigg\}.$$
If the function $f\in\mathcal{F},$ then $f$ satisfies the  subordination relation $1+(zf''(z)/f'(z))\prec(1+2z)/(1-z),$
for $\vert z\vert <1$.  The function $f_1$ defined as $zf_1''(z)/f_1'(z)=3z/(1-z)$ or 
\begin{equation}\label{eqt2}
f_1(z)=z+\frac{3}{2}z^2+2z^3+\frac{5}{2}z^4+\cdots
\end{equation}
and the function $f_2$ defined as $zf_2''(z)/f_2'(z)=3z^2/(1-z^2)$ or
\begin{equation}\label{eqt1}
f_2(z)=z+\frac{1}{2}z^3+\frac{3}{8}z^5+\cdots
\end{equation}
belong to the class $\mathcal{F}.$ 
Further, author \cite{Ozaki}  studied the class $\mathcal{G}$ as
$$\mathcal{G}=\bigg \{f\in\mathcal{A}: \Re\bigg(1+\frac{zf''(z)}{f'(z)}\bigg)<\frac{3}{2},\quad \mbox{for}\,\,\,\vert z\vert <1\bigg\}.$$
If the function $f\in\mathcal{G},$ then $f$ satisfies the subordination relation
$1+(zf''(z)/f'(z))\prec (1-2z)/(1-z),$ $z\in\mathbb{D}.$   The function $g_1$ defined as $zg_1''(z)/g_1'(z))=(1-2z)/(1-z)$ or
\begin{equation}\label{eqt3}
g_1(z)=z-\frac{1}{2}z^2+\cdots
\end{equation}
and the function $g_2$ defined as $zg_2''(z)/g_2'(z))=(1-2z^2)/(1-z^2)$ or
\begin{equation}\label{eqt4}
g_2(z)=z-\frac{1}{6}z^3-\frac{1}{40}z^5+\cdots
\end{equation}
belong to the class $\mathcal{G}.$ Thus, the classes $\mathcal{F}$ and $\mathcal{G}$ are non-empty as well as the subclasses of close-to-convex functions. The functions in the classes $\mathcal{F}$ and $\mathcal{G}$  are known as \textbf{Ozaki close-to-convex functions} which have nice geometric properties and are used to understand the shape and behavior of various subclasses of univalent functions.

In the literature, bounds on coefficient functionals like Hermitian-Toeplitz determinants,  logarithmic coefficients,  logarithmic inverse coefficients and  Schwarzian derivatives have many important consequences in univalent function theory including the Koebe distortion theorem which describes how conformal maps distort shapes and it is named after Paul Koebe, a German mathematician who first proved the theorem in 1907, the Bieberbach conjecture which was proposed by the German mathematician Ludwig Bieberbach in 1916, and the other coefficient inequalities. The coefficient inequality states that for any function $f\in \mathcal{S}$, the absolute value of the coefficient $a_n$ satisfies $|a_n|\leq n$ for all $n\geq 2$. In 1978, Leung~\cite{leung78} discussed about successive coefficients for the class of starlike functions. Further, Li and Sugawa~\cite{Li17} studied successive coefficients of convex functions. Recently, authors~\cite{arora23} computed the bound on successive coefficients $||a_{n+1}|-|a_n||$ for the class of spirallike functions of non-negative functions. 
If the function $f\in\mathcal{S},$ then the function $F$ which is  inverse of the function $f$ has expansion
$F(w)=f^{-1}(w)=w+A_2w^2+A_3w^3+...$
or equivalently $f(F(w))=f(f^{-1}(w))=w\quad(|w|<r_0(f), r_0(f)\geq\frac{1}{4}).$
Furthermore, the Taylor series expansion  of the function $F$ is given as
$F(w)=w-a_2w^2+(2a_2^2-a_3)w^3-(5a_2^3-5a_2a_3+a_4)w^4+(14a_2^4-21a_2^2a_3+3a_3^2+6a_2a_4-a_5)w^5+...\cdot$
 Therefore, the initial inverse coefficients are given as
\begin{align}\label{eq13}
A_{2}=-a_2, \,\,\,A_{3}=-a_3+2a_2^2\,\,\,\mbox{and}\, \,\,A_{4}=-a_4+5a_2a_3-5a_2^3.
\end{align}
For details, one may refer~\cite{Ali03,Kapoor2007}. 
For the function $f\in\mathcal{S},$ 
the \textbf{ logarithmic coefficients} defined  as
$log ({f(z)}/{z})=2\sum_{n=1}^{\infty}\gamma_nz^n,$ $z\in\mathbb{D}$
 such that the initial logarithmic coefficients becomes
\begin{align}
\gamma_1=\frac{1}{2}a_2 
\quad\mbox{and}\quad
\gamma_2=\frac{1}{2}(a_3-\frac{1}{2}a_2^2) \label{eq2a}.
\end{align}
Alarifi~\cite{Najla22} calculated upper bound on third logarithmic coefficient for three subclasses of close-to-convex functions without any kind of restrictions. For more details related to logarithmic coefficients, one may refer~\cite{Alimoh2021,Girela2000}. 
Ponnusamy et.al.~\cite{Ponnusamy}  discussed the logarithmic coefficients $\Gamma_n,  n\in\mathbb{N}$ of inverse function $F$ as 
$log({F(w)}/{w})=2\sum_{n=1}^{\infty}\Gamma_nw^n,$ $  |w|<{1}/{4}$ 
where the coefficients $\Gamma_n$ are known as \textbf{logarithmic inverse coefficients}. Thus the  initial logarithmic inverse coefficients are given as
$\Gamma_1=\frac{1}{2}A_2,$ 
$\Gamma_2=\frac{1}{2}(A_3-\frac{1}{2}A_2^2),$ and $\Gamma_3=\frac{1}{2}(A_4-A_2A_3+\frac{1}{3}A_2^3)$. 
Further, on substituting the value of $A_2, A_3$ and $A_4$ from  \eqref{eq13}, we get
\begin{align}
\Gamma_1=&-\frac{1}{2}a_2 \label{eq43},\\
\Gamma_2=&-\frac{1}{2}(a_3-\frac{3}{2}a_2^2) \label{eq44},\\
\Gamma_3=&-\frac{1}{2}(a_4-4a_2a_3+\frac{10}{3}a_2^3)\label{eq45}.
\end{align}
For locally univalent functions $f$, the Schwarzian derivative   is defined by
$S_f(z)=\left(\frac{f''(z)}{f'(z)}\right)'-\frac{1}{2}\left(\frac{f''(z)}{f'(z)}\right)^2.$ Denote $\sigma_3(f)=S_f(z)$ and from \cite{esc}, the higher order Schwarzian derivative is given as
$	\sigma_{n+1}(f)=(\sigma_n(f))'-(n-1)\sigma_n(f) {f''}/{f'},$ $ n\geq 4.$
Without loss of generality, we assume that $\sigma_n(f)(0)=:\textbf{S}_n$ so that the third and fourth order Scharzian derivatives become
\begin{equation}\label{s34}
\textbf{S}_3=\sigma_3(f)(0)=6(a_3-a_2^2)\quad \mbox{and}\quad \textbf{S}_4=\sigma_4(f)(0)=24(a_4-3a_2a_3+2a_2^3).
\end{equation}
Nehari~\cite{ZNihari} gave a criteria of univalency of a analytic function using Schwarzian derivatives. 
For two natural numbers $q$ and $n$, the $q^\text{th}$ Hankel determinant $H_q(n)$  for the function $f\in \mathcal{S}$ is given by
$H_q(n):=\det \{a_{n+i+j-2}\}^q_{i,j}$, $ 1 \leq i,\,j \leq q, \, a_1=1$.
The $q^{th}$  Hermitian-Toeplitz determinant for the function $f\in \mathcal{S}$ is given by $T_{q,n}(F_{f}):=\det \{a_{ij}\}$, where $a_{ij}=a_{n+j-i}$ for $j\geq i$ and $a_{ij}=\overline{a_{ji}}$ for $j<i$.
Thus, the second order Hermitian-Toeplitz determinant for the function $f\in\mathcal{S}$ is given as
$T_{2,1}(F_{f})=\begin{vmatrix}
1 & a_{2} \\
\bar{a}_{2}& 1 
\end{vmatrix}=1-|a_2|^2.$
In terms of logarithmic coefficient, $T_{2,1}(F_{f}/2)$ becomes
$T_{2,1}(	F_{f}/\gamma)=\begin{vmatrix}
\gamma_{1} & \gamma_{2} \\
\bar{\gamma}_{2}& \gamma_1 
\end{vmatrix}=\gamma_1^2-|\gamma_2|^2.$
On substituting the values of $\gamma_1$ and $\gamma_{2}$ from    \eqref{eq2a},  we get
\begin{equation}\label{eq105}
T_{2,1}(F_{f}/\gamma)=\frac{1}{16}(-a_2^4+4a_2^2+4a_2^2\Re a_3-4|a_3|^2).
\end{equation}
We recall that in~\cite{Cudna}, authors computed sharp estimates on second and third order Hermitian-Toeplitz determinants involving initial coefficients of certain univalent functions. Obradovic and Tuneski~\cite{Tuneski}  computed the bounds on  Hermitian-Toeplitz determinant of third order involving initial coefficients of univalent functions. The authors~\cite{Rai22} computed   bounds on third order Hermitian-Toeplitz determinant for the  starlike tan hyperbolic functions.  The authors~\cite{mandal23} computed bounds on Hankel and Toeplitz determinants of Logarithmic coefficients of inverse functions for certain   univalent functions.
In~\cite{Allu19}, authors determined bounds on certain coefficients together with growth estimates for Ozaki close-to-convex functions. Further, the sharp bound on second Hankel determinant involving initial coefficients as well as inverse coefficients for the subclass of strongly Ozaki close-to-convex functions were determined in~\cite{sim21}. 
Further, authors~\cite{Tuneski22} improved upper bound of the third order Hankel determinant for the classes $\mathcal{F}$ and $\mathcal{G}$ respectively and they conjectured the sharpness of the bound. 
In a recent paper~\cite{Eker23}, authors established sharp bound on the second Hankel determinant involving logarithmic coefficients with invariance property of strongly Ozaki close-to-convex functions. 

Motivated by the above discussed literature, in the second section, we provide the sharp bounds on second order Hermitian-Toeplitz determinant $T_{2,1}(F_{f}/\gamma)$, initial logarithmic inverse coefficients $|\Gamma_i|$; $i=1,2,3$, third order Schwarzian derivative $|S_3|$ and the difference of successive inverse coefficients $|A_3-A_2|$ as well as the difference of logarithmic inverse coefficients $|\Gamma_3-\Gamma_2|$ respectively for the functions $f\in \mathcal{F}$. 
In the third section, we provide the sharp bounds on second order Hermitian-Toeplitz determinant $T_{2,1}(F_{f}/\gamma)$, initial logarithmic inverse coefficients $|\Gamma_i|$; $i=1,2,3$ and  third and fourth order Schwarzian derivatives $|S_3|$ and $|S_4|$ for the functions $f\in \mathcal{G}$. 

The following lemmas will play an important role in the  demonstration of  main results.
\begin{lem}\label{eqlem3}\cite{Sur1940}
Let $w(z)=c_1z+c_2z^2+c_3z^3+c_4z^4+...$ be a Schwarz function. Then 
\[|c_1|\leq1,\quad|c_2|\leq1-|c_1|^2,\quad|c_3|\leq1-|c_1|^2-\frac{|c_2|^2}{1+|c_1|}.\]
\end{lem}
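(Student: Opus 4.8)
Although this is a classical fact, here is the route I would take, obtaining all three inequalities from a single device: the iterated Schwarz--Pick lemma (equivalently, the first two steps of the Schur algorithm). Since $w$ is a Schwarz function with $w(0)=0$, the Schwarz lemma shows that $\omega_1(z):=w(z)/z=c_1+c_2z+c_3z^2+\cdots$ maps $\mathbb{D}$ into $\overline{\mathbb{D}}$, so in particular $|c_1|=|\omega_1(0)|\le 1$, which is the first assertion. If $|c_1|=1$ then $\omega_1$ is the unimodular constant $c_1$, whence $c_2=c_3=0$ and the remaining two inequalities are trivial; so from now on I may assume $|c_1|<1$ (and if $\omega_1$ happens to be constant then $w(z)=c_1z$ and again there is nothing to prove).

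Next I would compose $\omega_1$ with the disk automorphism $\zeta\mapsto(\zeta-c_1)/(1-\overline{c_1}\zeta)$ to form the Schwarz function
\[
\psi(z)=\frac{\omega_1(z)-c_1}{1-\overline{c_1}\,\omega_1(z)},\qquad \psi(0)=0,
\]
and expand it in a power series. Writing $1-\overline{c_1}\,\omega_1(z)=(1-|c_1|^2)\bigl(1-\tfrac{\overline{c_1}c_2}{1-|c_1|^2}z-\cdots\bigr)$ and carrying out the division by a geometric series, the first two Taylor coefficients of $\psi$ come out to be
\[
d_1=\frac{c_2}{1-|c_1|^2},\qquad d_2=\frac{c_3}{1-|c_1|^2}+\frac{\overline{c_1}\,c_2^2}{(1-|c_1|^2)^2}.
\]
Applying the Schwarz lemma to $\psi(z)/z$ gives $|d_1|\le 1$, which is precisely $|c_2|\le 1-|c_1|^2$; and if equality holds here then $\psi(z)/z$ is the unimodular constant $d_1$, forcing $c_3=0$ so that the last inequality is immediate. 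Hence I may further assume $|d_1|<1$.

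Finally I would bootstrap, applying the second inequality (now established) to the Schwarz function $\psi(z)=d_1z+d_2z^2+\cdots$ to get $|d_2|\le 1-|d_1|^2$. Writing $\dfrac{c_3}{1-|c_1|^2}=d_2-\dfrac{\overline{c_1}\,c_2^2}{(1-|c_1|^2)^2}$ and using the triangle inequality together with $|d_1|=|c_2|/(1-|c_1|^2)$,
\[
\frac{|c_3|}{1-|c_1|^2}\le |d_2|+\frac{|c_1|\,|c_2|^2}{(1-|c_1|^2)^2}\le 1-\frac{|c_2|^2}{(1-|c_1|^2)^2}+\frac{|c_1|\,|c_2|^2}{(1-|c_1|^2)^2}=1-\frac{|c_2|^2}{(1+|c_1|)\,(1-|c_1|^2)},
\]
and multiplying through by $1-|c_1|^2$ yields $|c_3|\le 1-|c_1|^2-|c_2|^2/(1+|c_1|)$, the third assertion. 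The only bookkeeping of any substance is the power-series division that produces $d_1$ and $d_2$, so there is no genuine obstacle here; the single point requiring care is to dispose of the boundary cases $|c_1|=1$ and $|d_1|=1$ before each invocation of the Schwarz lemma, which is why those degenerate situations are handled separately above.
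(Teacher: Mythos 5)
The paper itself offers no proof of this lemma: it is quoted as a classical result of Carlson \cite{Sur1940}, so there is no in-paper argument to compare with. Your Schur-algorithm (iterated Schwarz lemma) derivation is the standard route to these coefficient bounds, and the core of it is correct: the coefficients of $\psi$ are indeed $d_1=c_2/(1-|c_1|^2)$ and $d_2=c_3/(1-|c_1|^2)+\overline{c_1}c_2^2/(1-|c_1|^2)^2$, and your final triangle-inequality computation, combined with $|d_2|\le 1-|d_1|^2$, yields exactly $|c_3|\le 1-|c_1|^2-|c_2|^2/(1+|c_1|)$.

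One claim, however, is false: in the boundary case $|d_1|=1$ (i.e.\ $|c_2|=1-|c_1|^2$ with $0<|c_1|<1$) you assert that $\psi(z)=d_1z$ forces $c_3=0$. It only forces $d_2=0$, i.e.\ $c_3=-\overline{c_1}c_2^{\,2}/(1-|c_1|^2)$, which need not vanish: for $w(z)=z(z+a)/(1+az)$ with $0<a<1$ one has $c_1=a$, $c_2=1-a^2$, $c_3=-a(1-a^2)\neq 0$, while $\psi(z)=z$. The stated inequality still holds there (indeed with equality, since $|c_3|=|c_1|(1-|c_1|^2)=1-|c_1|^2-|c_2|^2/(1+|c_1|)$), but your justification of that case is not valid as written. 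The cleanest repair is to drop the case split entirely: the inequality $|c_2|\le 1-|c_1|^2$ has already been established for \emph{every} Schwarz function, so applying it to $\psi$ gives $|d_2|\le 1-|d_1|^2$ with no assumption that $|d_1|<1$, and your concluding estimate then covers all cases at once.
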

\begin{lem}\label{lem1}\cite{Libera1983}
Let $\mathcal{P}$ be the class of analytic functions having the Taylor series of the form
\begin{equation}\label{pf}
p(z)=1+p_1z+p_2z^2+p_3z^3+\cdots
\end{equation}
satisfying the condition $\Re (p(z))>0\;(z\in\mathbb{D})$. Then
\begin{align*}
2p_2=&p_1^2+t\xi,\\
4p_3=&p_1^3+2p_1t\xi-p_1t\xi^2+2t(1-|\xi|^2)\eta,\\
8p_4=&p_1^4+3p_1^2t\xi+(4-3p_1^2)t\xi^2+p_1^2t\xi^3+4t(1-|\xi|^2)(1-|\eta|^2)\gamma\\
&\qquad\qquad\,\,+4t(1-|\xi|^2)(p_1\eta-p_1\xi\eta-\bar{\xi}\eta^2),
\end{align*}
for some $\xi ,\eta,\gamma\in \overline{\mathbb{D}}$ and  $t=(4-p_1^2).$
\end{lem}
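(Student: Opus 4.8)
The plan is to transfer everything to a Schwarz function and then run the Schur (continued fraction) algorithm, whose magnitude estimates are exactly Lemma~\ref{eqlem3}. First I would normalise so that $p_1\in[0,2]$: replacing $p(z)$ by $p(e^{i\varphi}z)$ for a suitable $\varphi$ keeps $p$ in $\mathcal P$ and rotates $p_1$ onto $[0,\infty)$, and it is this normalisation that makes the statement consistent, since it is phrased with $t=4-p_1^2$ rather than $4-|p_1|^2$. Set $\omega(z)=(p(z)-1)/(p(z)+1)=c_1z+c_2z^2+c_3z^3+\cdots$; then $\omega$ is a Schwarz function and $p\mapsto\omega$ is a bijection of $\mathcal P$ onto the class of such functions. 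Rewriting the defining relation as $p(z)-1=2\omega(z)+\omega(z)\bigl(p(z)-1\bigr)$ and comparing coefficients of $z,z^2,z^3,z^4$ gives the triangular system
\begin{align*}
p_1&=2c_1, & p_2&=2c_2+c_1p_1,\\
p_3&=2c_3+c_1p_2+c_2p_1, & p_4&=2c_4+c_1p_3+c_2p_2+c_3p_1;
\end{align*}
in particular $c_1=p_1/2$ is real and $1-|c_1|^2=(4-p_1^2)/4=t/4$.

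Next I would parametrise $c_2,c_3,c_4$. Write $\phi_0(z):=\omega(z)/z$, a holomorphic map $\mathbb{D}\to\overline{\mathbb{D}}$ with $\phi_0(0)=c_1$. When $|c_1|<1$, dividing out the disc automorphism carrying $c_1$ to $0$ gives $\phi_0=\frac{c_1+z\phi_1}{1+\bar c_1 z\phi_1}$ for a holomorphic $\phi_1:\mathbb{D}\to\overline{\mathbb{D}}$, and one sets $\xi:=\phi_1(0)$; iterating, $\phi_1=\frac{\xi+z\phi_2}{1+\bar\xi z\phi_2}$ with $\eta:=\phi_2(0)$ and $\phi_2=\frac{\eta+z\phi_3}{1+\bar\eta z\phi_3}$ with $\gamma:=\phi_3(0)$. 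By the Schwarz lemma $\xi,\eta,\gamma\in\overline{\mathbb{D}}$, and whenever one of $1-|c_1|^2$, $1-|\xi|^2$, $1-|\eta|^2$ vanishes the corresponding remaining parameters may be chosen to be $0$. Expanding each Möbius factor as a geometric series and comparing Taylor coefficients — the computation underlying Lemma~\ref{eqlem3} — yields
\begin{align*}
c_2&=(1-|c_1|^2)\,\xi,\\
c_3&=(1-|c_1|^2)\bigl[(1-|\xi|^2)\eta-\bar c_1\xi^2\bigr],\\
c_4&=(1-|c_1|^2)\bigl[(1-|\xi|^2)(1-|\eta|^2)\gamma-(1-|\xi|^2)\bar\xi\eta^2-2\bar c_1(1-|\xi|^2)\xi\eta+\bar c_1^2\xi^3\bigr].
\end{align*}

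Finally I would substitute $\bar c_1=c_1=p_1/2$ and $1-|c_1|^2=t/4$, along with these expressions for $c_2,c_3,c_4$, into the triangular system and solve successively for $p_2$, then $p_3$, then $p_4$, collecting like terms. The first step gives $2p_2=p_1^2+t\xi$; feeding this back gives $4p_3=p_1^3+2p_1t\xi-p_1t\xi^2+2t(1-|\xi|^2)\eta$; and one more substitution produces the stated identity for $8p_4$.

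The only genuinely delicate step is this last round of algebra: the $8p_4$ identity is a sum of roughly a dozen monomials in $p_1,\xi,\eta,\gamma$ (and $\bar\xi,\bar\eta$), and the weights $1-|\xi|^2$ and $1-|\eta|^2$ must be tracked with care — in particular one must not confuse the free Schur parameters $\eta,\gamma$ with the first Taylor coefficients of $\phi_1,\phi_2$, which differ from them precisely by such factors. Keeping this straight is exactly what turns the naive term $2t\eta$ into the correct $2t(1-|\xi|^2)\eta$ in the $p_3$ formula and, similarly, accounts for the shape of the $8p_4$ formula. Everything else — the bijection $p\leftrightarrow\omega$, the coefficient comparison, and the iterated Schwarz-lemma step — is routine.
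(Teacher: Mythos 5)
Your proposal is correct, but note that the paper itself gives no proof of this lemma: it is quoted with a citation to Libera--Zlotkiewicz (and, strictly speaking, only the $2p_2$ and $4p_3$ formulas are theirs; the $8p_4$ identity is due to Kwon, Lecko and Sim, who also argue through the Carath\'eodory class rather than through Schwarz functions). Your route -- Cayley transform $\omega=(p-1)/(p+1)$, the triangular coefficient system $p_1=2c_1$, $p_2=2c_2+c_1p_1$, $p_3=2c_3+c_1p_2+c_2p_1$, $p_4=2c_4+c_1p_3+c_2p_2+c_3p_1$, then the Schur-parameter expansions of $c_2,c_3,c_4$ -- is a legitimate, self-contained alternative, and I checked the key steps: your expressions $c_2=(1-|c_1|^2)\xi$, $c_3=(1-|c_1|^2)\bigl[(1-|\xi|^2)\eta-\bar c_1\xi^2\bigr]$ and the stated $c_4$ are exactly what the iterated M\"obius expansion gives, and back-substitution with $c_1=p_1/2$, $1-c_1^2=t/4$ does reproduce all three displayed identities, including the coefficient $(4-3p_1^2)t\xi^2$ coming from $t^2-2p_1^2t$. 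You also correctly flag the one genuine subtlety in the statement: with $t=4-p_1^2$ (rather than $4-|p_1|^2$ and $\bar p_1$ in the $\xi^2$ and $\xi\eta$ terms) the identities are only meaningful for $p_1\in[0,2]$, which is how the lemma is standardly read and how this paper applies it (setting $p_1=p\in[0,2]$); the rotation $p(z)\mapsto p(e^{i\varphi}z)$ is the right way to reduce to that case, with the understanding that one proves the formulas for the rotated function and applies them to rotation-invariant functionals. Your handling of the degenerate cases ($|c_1|=1$, $|\xi|=1$, $|\eta|=1$, where the algorithm terminates and the remaining parameters are set to $0$) is also the correct fix. Compared with the cited proofs, your derivation is more elementary (only the Schwarz lemma and geometric-series expansions, essentially the same machinery as Lemma~\ref{eqlem3}) and delivers $p_2$, $p_3$, $p_4$ in one sweep; what it does not give, and what the Toeplitz/Carath\'eodory approach emphasizes, is the converse direction, namely that every admissible choice of $\xi,\eta,\gamma\in\overline{\mathbb{D}}$ is attained by some $p\in\mathcal{P}$ -- harmless here since only the stated "there exist" direction is used, but worth a sentence if you write the argument out in full.
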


\section{\textbf{The class $ \mathcal{F}$ }}
In this section, we first determine  sharp bounds on  $T_{2,1}(F_{f}/\gamma)$ for the functions $f\in \mathcal{F}$.

\begin{thm}
Let the function $f(z)=z+\sum_{n=2}^{\infty}a_nz^n$ be in the class $ \mathcal{F}$. Then  
$$-\frac{1}{16}\leq T_{2,1}(F_{f}/\gamma)\leq\frac{95}{256}.$$
The upper and lower bound are sharp for the functions $f_1$ and $f_2$ given by \eqref{eqt2} and \eqref{eqt1} respectively. 
\end{thm}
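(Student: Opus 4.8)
The plan is to translate the defining subordination of $\mathcal F$ into Carathéodory data and reduce the problem to a finite real optimization. Since $f\in\mathcal F$ exactly when $1+zf''(z)/f'(z)\prec(1+2z)/(1-z)$, writing the subordinating Schwarz function as $w=(p-1)/(p+1)$ with $p\in\mathcal P$ turns this into $zf''(z)/f'(z)=\tfrac32\bigl(p(z)-1\bigr)$, and comparing the first two nontrivial Taylor coefficients yields $a_2=\tfrac34 p_1$ and $a_3=\tfrac14 p_2+\tfrac38 p_1^{2}$. The class $\mathcal F$ is invariant under $f(z)\mapsto e^{-i\theta}f(e^{i\theta}z)$, which multiplies $a_n$ by $e^{i(n-1)\theta}$, so after a rotation I may assume $a_2\ge 0$; then formula~\eqref{eq105} applies verbatim, and substituting the expressions above turns $T_{2,1}(F_{f}/\gamma)$ into an explicit real function of $p_1\in[0,2]$, $\RE p_2$ and $|p_2|^{2}$.

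Next I would invoke Lemma~\ref{lem1} in the form $2p_2=p_1^{2}+(4-p_1^{2})\xi$ with $\xi\in\overline{\mathbb D}$, so that with $x=p_1\in[0,2]$, $t=4-x^{2}$, $u=\RE\xi$ and $\rho=|\xi|$ (where $|u|\le\rho\le1$) the functional becomes a polynomial $\Phi(x,u,\rho)$ that is affine in $u$ with nonpositive leading coefficient. For the upper bound one pushes $u$ to its smallest admissible value $u=-\rho$; then $\Phi$ is a concave quadratic in $\rho$ on $[0,1]$, whose maximiser is the interior critical point $\rho^{\ast}(x)=7x^{2}/\bigl(4(4-x^{2})\bigr)$ when $\rho^{\ast}(x)\le1$ and the endpoint $\rho=1$ otherwise. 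On each regime one substitutes back and is left with a one-variable polynomial in $s=x^{2}\in[0,4]$ to be maximised by elementary calculus; the stated value $\tfrac{95}{256}$ should come out at the boundary $s=4$, i.e.\ $p(z)=(1+z)/(1-z)$, which is exactly the $p$ attached to the function $f_1$ of~\eqref{eqt2}. For the lower bound the same reduction with $u$ pushed to $u=\rho$ and then $\rho=1$ produces a concave parabola in $s$ whose minimum on $[0,4]$ sits at $s=0$, equivalently $p_1=0$ and $\xi=1$; this corresponds to $f_2$ in~\eqref{eqt1} and gives $-\tfrac1{16}$. Sharpness is then confirmed by evaluating~\eqref{eq105} directly on $f_1$ and $f_2$.

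The main obstacle is the optimization of $\Phi$ over this three-parameter region. Once the constraint $\rho^{\ast}(x)\le1$ is imposed, the optimal $\rho$ depends only piecewise smoothly on $x$, so the range of $x$ must be split and one has to keep careful track of which critical points are admissible in each piece and compare the corresponding one-variable extrema; overlooking an interior critical point in one of the subcases is the easiest way to land on the wrong extremal value. Everything else — the coefficient extraction, the normalization $a_2\ge0$, and the verification that $f_1$ and $f_2$ realize the bounds — is routine.
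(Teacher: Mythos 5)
Your proposal follows the paper's route exactly: the same Carath\'eodory representation $a_2=\tfrac34p_1$, $a_3=\tfrac18(3p_1^2+2p_2)$, the same appeal to Lemma~\ref{lem1} in the form $2p_2=p_1^2+(4-p_1^2)\xi$, and the same majorization $-\Re\xi\le|\xi|$, followed by an optimization in $(p_1,|\xi|)$; the lower-bound half of your sketch is correct and agrees with the paper. The genuine gap is in the upper bound, and it is exactly the pitfall you yourself flagged. In the regime where $\rho^{*}(x)=7x^{2}/(4(4-x^{2}))>1$, i.e.\ $s=x^{2}\in[16/11,4]$, setting $\rho=1$ leaves the polynomial $\frac{1}{4096}\left(-121s^{2}+928s-256\right)$, and this is \emph{not} maximized at the endpoint $s=4$: its vertex $s=464/121\approx 3.835$ lies strictly inside $[16/11,4]$ and gives the value $45/121\approx 0.37190$, which exceeds $95/256\approx 0.37109$. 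So your assertion that ``the stated value $95/256$ should come out at the boundary $s=4$'' is false within your own (correctly set up) scheme; carried out faithfully, your optimization yields $45/121$, not $95/256$.

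Moreover, this larger value corresponds to admissible data, so the gap cannot be closed by a more careful case split of the same majorant: take $\xi=-1$ and $p_1=4\sqrt{29}/11$, realized for instance by $p(z)=(1-z^{2})/(1-p_1z+z^{2})\in\mathcal{P}$, and define $f$ through \eqref{eq201}; then $a_2=\tfrac34p_1$, $a_3=\tfrac18(5p_1^{2}-4)$, $\gamma_1^{2}=261/484$, $\gamma_2=9/22$, and $T_{2,1}(F_f/\gamma)=45/121$. For comparison, the paper's printed proof contains the very same oversight (it asserts $\Upsilon(p,1)\le 95/256$ on the edge $x=1$, which fails for $p^{2}$ near $464/121$), so reproducing the paper's boundary analysis would not repair your argument either. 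In short: same method as the paper, correct lower bound, but the upper-bound step as described would fail at the interior critical point of the $\rho=1$ subcase, precisely where you predicted trouble, and the evidence above shows the stated constant $95/256$ cannot be reached by this argument at all.
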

\begin{proof}
Let the function  $f\in\mathcal{F}.$  Then
$1+\frac{zf''(z)}{f'(z)}=\frac{1+2w(z)}{1-w(z)}$
 where $w(z)$ is a Schwarz function defined in  $\vert z\vert <1$. Since $p(z)={(1+w(z))}/{(1-w(z))}\in \mathcal{P},$  we have
\begin{equation}\label{eq201}
1+\frac{zf''(z)}{f'(z)}=\frac{3p(z)-1}{2}.
\end{equation}
The Taylors series expansions  of left hand side and right hand side   in expression  \eqref{eq201} are given as
\begin{align}\label{eq400}
1+\frac{zf''(z)}{f'(z)}=1&+2a_2z+(-4a_2^2+6a_3)z^2+(8a_2^3-18a_2a_3+12a_4)z^3\notag\\&+(-16a_2^4+48a_2^2a_3-18a_3^2-32a_2a_4+20a_5)z^4+\cdots
\end{align} and
\begin{align}\label{eq404}
\frac{3p(z)-1}{2}=1+\frac{3p_1z}{2} +\frac{3p_2z^2}{2}+\frac{3p_3z^3}{2}+\cdots
\end{align}
respectively. On comparing  \eqref{eq400} and \eqref{eq404}, we get initial coefficients 
\begin{align}
a_2=&\frac{3}{4}p_1\label{eq1},\\
a_3=&\frac{1}{8}(3p_1^2+2p_2)\label{eq1a},\\
a_4=&\frac{1}{64}(9p_1^3+18p_1p_2+8p_3)\label{eq1b}.
\end{align}
On substituting  the values of $a_2$ and $a_3$ in \eqref{eq105}, we get
\begin{align*}
T_{2,1}(	F_{f}/\gamma)=\frac{1}{16}\bigg(-\frac{81p_1^4}{256}+\frac{36p_1^2}{16}+\frac{36p_1^2}{128}\Re(3p_1^2+2p_2)-\frac{1}{16}|3p_1^2+2p_2|^2\bigg).
\end{align*}
By using  Lemma \ref{lem1}, the  above expression becomes
\begin{equation}\label{eq4}
T_{2,1}(	F_{f}/\gamma))=\frac{1}{4096}(-49p_1^4+576p_1^2-56p_1^2(4-p_1^2)\Re(\xi)-16(4-p_1^2)^2|\xi|^2).
\end{equation}
Using  $-\Re(\xi)\leq|\xi|,$ and  setting $x=|\xi|\in[0,1]$ and $p_1=p,$ expression   \eqref{eq4} becomes
\begin{align*}
T_{2,1}(	F_{f}/\gamma)&\leq\frac{1}{4096}(-49p_1^4+576p_1^2+56p_1^2(4-p_1^2)x-16(4-p_1^2)^2x^2)=\Upsilon(p,x).
\end{align*}
In order to prove our result,  we determine  the maximum value of the function $\Upsilon$ over  rectangular region $\Omega=[0,2]\times[0,1].$ We consider two cases on region  $\Omega$ for the same.
\begin{itemize}
\item[(1)] First, the boundary points of $\Omega$ are being  considered. A simple calculation gives
\begin{align*}
 &\Upsilon(0,x)=-\frac{x^2}{16}\leq0,\quad\Upsilon(2,x)=\frac{95}{256},\quad \Upsilon(p,0)=\frac{1}{4096}(-49p^4+576p^2)\leq\frac{9   5}{256},\\&\mbox{and}\quad\Upsilon(p,1)=\frac{1}{4096}(-121p^4+928p^2-256)\leq\frac{95}{256}.
\end{align*}
\item[$(2)$] Next, the interior points of $\Omega$ are being considered. A solution of  system of equations ${\partial \Upsilon(p,x)}/{\partial p}=0$ and ${\partial \Upsilon(p,x)}/{\partial x}=0$ refers a critical point of $\Upsilon.$ The equation   ${\partial \Upsilon(p,x)}/{\partial x}=0$ gives $x=7p^2/(4(4-p^2))=x_p\in(0,1)$ which  holds for $p<{\sqrt{16/11}}\in(0,2).$ Further we solve ${\partial \Upsilon(p,x)}/{\partial p}=0$ and substituting the value $x_p$ we get $p^5-8p^3+16p=0$ which is never possible for $p\in(0,2).$
Thus, the function $\Upsilon$ has no maximum value in the interior of $\Omega.$
\end{itemize}
From the  cases (1) and (2), we  conclude the desire upper bound  ${95}/{256}$ on  $T_{2,1}(F_{f}/\gamma)$.

Using  inequality $-\Re(\xi)\geq-|\xi|$ and setting $x=|\xi|$  in  \eqref{eq4},  we get
\begin{align*}
T_{2,1}(F_{f}/\gamma)&\geq\frac{1}{4096}(-49p^4+576p^2-56p^2(4-p^2)x-16(4-p^2)^2x^2)\notag\\&=\Psi(p,x).
\end{align*}
On the  boundary of $\Omega$, a simple calculation on $\Psi(p,x)$ yields
\begin{align*}
 &\Psi(0,x)=-\frac{x^2}{16}\geq-\frac{1}{16},\quad\Psi(2,x)=\frac{95}{256},\quad\Psi(p,0)=\frac{1}{4096}(-49p^4+576p^2)\geq0,\\
 &\mbox{and}\,\,\,\Psi(p,1)=\frac{1}{4096}(-9p^4+480p^2-256)\geq-\frac{1}{16}.
\end{align*} 
In the interior of $\Omega$,   we note that ${\partial \Psi(p,x)}/{\partial x}={-((4-p^2)(56p^2+32x(4-p^2)))}/4096\neq0$.  The function $\Psi$ has no minimum value in the interior region of $\Omega.$  
Therefore, we  conclude the desire lower bound  $-1/16$ on  $T_{2,1}(F_{f}/\gamma)$.\qedhere
\end{proof}
In the next result, we compute the sharp bounds on  $|\Gamma_i|$; $i=1,2,3$ for the functions $f\in \mathcal{F}$. 
\begin{thm}\label{thm1}
Let the function $f(z)=z+\sum_{n=2}^{\infty}a_nz^n$ be in the class $ \mathcal{F}$. Then 
$$|\Gamma_1|\leq\frac{3}{4},\quad|\Gamma_2|\leq\frac{11}{16},\quad|\Gamma_3|\leq\frac{7}{8}.$$
All inequalities are sharp for the function $f_1$ given in \eqref{eqt2}.
\end{thm}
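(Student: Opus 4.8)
The plan is to rewrite each of $\Gamma_1,\Gamma_2,\Gamma_3$ in terms of the coefficients $p_1,p_2,p_3$ of the function $p\in\mathcal{P}$ associated with $f$ via \eqref{eq201}, by feeding \eqref{eq1}--\eqref{eq1b} into \eqref{eq43}--\eqref{eq45}, and then to estimate with Lemma \ref{lem1}. As in the proof of the preceding theorem, since $\mathcal{F}$ and each $|\Gamma_i|$ are invariant under the rotation $f(z)\mapsto e^{-i\theta}f(e^{i\theta}z)$, I may assume $p_1=p\in[0,2]$; this normalization is what makes the triangle-inequality estimates sharp enough, since it lets me write $|4-p^{2}|=4-p^{2}$.

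The bound on $|\Gamma_1|$ is immediate: \eqref{eq43} and \eqref{eq1} give $|\Gamma_1|=\tfrac{3}{8}p\le\tfrac34$. For $|\Gamma_2|$, I would substitute \eqref{eq1} and \eqref{eq1a} into \eqref{eq44} to get $\Gamma_2=\tfrac{15}{64}p^{2}-\tfrac18 p_2$, then replace $2p_2$ by $p^{2}+(4-p^{2})\xi$ from Lemma \ref{lem1}, obtaining $\Gamma_2=\tfrac{11}{64}p^{2}-\tfrac{1}{16}(4-p^{2})\xi$ with $\xi\in\overline{\mathbb{D}}$. Since $|\xi|\le1$ and $0\le p\le2$, this yields $|\Gamma_2|\le\tfrac{11}{64}p^{2}+\tfrac{1}{16}(4-p^{2})=\tfrac{1}{64}(7p^{2}+16)$, an increasing function of $p$ on $[0,2]$, so $|\Gamma_2|\le\tfrac{11}{16}$.

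For $|\Gamma_3|$, substituting \eqref{eq1}--\eqref{eq1b} into \eqref{eq45} should give $\Gamma_3=-\tfrac{1}{128}\bigl(27p^{3}-30p\,p_2+8p_3\bigr)$. Then I would insert the expressions for $2p_2$ and $4p_3$ from Lemma \ref{lem1}, with $t=4-p^{2}$, and after simplification expect
\[
\Gamma_3=-\frac{1}{128}\Bigl(14p^{3}-11p\,t\,\xi-2p\,t\,\xi^{2}+4\,t\,(1-|\xi|^{2})\eta\Bigr),\qquad \xi,\eta\in\overline{\mathbb{D}}.
\]
Bounding with $|\eta|\le1$ and writing $x=|\xi|\in[0,1]$ gives $|\Gamma_3|\le\tfrac{1}{128}\bigl(14p^{3}+11p\,t\,x+2t(p-2)x^{2}+4t\bigr)$; since $t\ge0$ and $p\le2$ the $x^{2}$-term is non-positive, so it can be discarded, and then using $x\le1$ in the linear term leaves $|\Gamma_3|\le\tfrac{1}{128}\bigl(14p^{3}+11pt+4t\bigr)=\tfrac{1}{128}\bigl(3p^{3}-4p^{2}+44p+16\bigr)$, whose derivative $\tfrac{1}{128}(9p^{2}-8p+44)$ is positive on $[0,2]$; hence the maximum $\tfrac{112}{128}=\tfrac78$ is attained at $p=2$.

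For sharpness, note that $f_1$ in \eqref{eqt2} corresponds to $w(z)=z$, i.e. $p_1=p_2=p_3=2$; substituting these into \eqref{eq43}--\eqref{eq45} gives $|\Gamma_1|=\tfrac34$, $|\Gamma_2|=\tfrac{11}{16}$ and $|\Gamma_3|=\tfrac78$, so the three bounds are best possible. I expect the only non-routine point to be the bookkeeping in the Libera substitution for $\Gamma_3$; the crucial observation — which collapses what looks like a two-variable maximization over $[0,2]\times[0,1]$ into an elementary one-variable problem — is that after regrouping the $\xi$- and $\xi^{2}$-contributions the surviving quadratic in $|\xi|$ has leading coefficient $2t(p-2)\le0$, so that term drops out entirely.
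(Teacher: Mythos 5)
Your proof is correct, but it follows a genuinely different route from the paper's. The paper expresses $a_2,a_3,a_4$ in terms of the Schwarz-function coefficients $c_1,c_2,c_3$ (equations \eqref{eq1f}--\eqref{eq1h}) and estimates $|\Gamma_2|$ and $|\Gamma_3|$ via Carlson's Lemma~\ref{eqlem3}; for $|\Gamma_3|$ this leads to a two-variable maximization of a function $\chi(|c_1|,|c_2|)$ over the region $\Lambda=\{|c_1|\le1,\ |c_2|\le1-|c_1|^2\}$, settled by checking the boundary and excluding interior critical points. You instead work with the Carath\'eodory coefficients via \eqref{eq1}--\eqref{eq1b} and the Libera--Zlotkiewicz parametrization of Lemma~\ref{lem1}, after the rotation normalization $p_1=p\in[0,2]$, which you correctly justify by the rotation invariance of $\mathcal{F}$ and of each $|\Gamma_i|$. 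Your computations check out: $\Gamma_2=\tfrac{11}{64}p^2-\tfrac{1}{16}(4-p^2)\xi$ and $\Gamma_3=-\tfrac{1}{128}\bigl(14p^3-11pt\xi-2pt\xi^2+4t(1-|\xi|^2)\eta\bigr)$ with $t=4-p^2$ are both correct, the key regrouping $2ptx^2+4t(1-x^2)=4t+2t(p-2)x^2\le 4t$ is valid since $t\ge0$ and $p\le2$, and the resulting one-variable bound $\tfrac{1}{128}(3p^3-4p^2+44p+16)$ is indeed increasing on $[0,2]$ with maximum $\tfrac78$ at $p=2$; the sharpness verification with $p_1=p_2=p_3=2$ (the function $f_1$ of \eqref{eqt2}) is also right and yields exactly $|\Gamma_1|=\tfrac34$, $|\Gamma_2|=\tfrac{11}{16}$, $|\Gamma_3|=\tfrac78$. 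What your approach buys is a cleaner extremal analysis for $|\Gamma_3|$ --- no boundary-versus-interior case analysis at all --- at the small cost of invoking rotation invariance, which the paper's modulus-only Schwarz-coefficient argument does not need.
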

\begin{proof}
As before, if the function $f\in\mathcal{F},$  then there exist a Schwarz function $w(z)=\sum_{{k=1}}^{\infty}c_kz^k$ in  $\vert z\vert <1$ such that 
$1+\frac{zf''(z)}{f'(z)}=\frac{1+2w(z)}{1-w(z)}.$
A simple calculation gives
\begin{align}\label{eq401}
\frac{1+2w(z)}{1-w(z)}=1&+3c_1z+3(c_1^2+c_2)z^2+3(c_1^3+2c_1c_2+c_3)z^3\notag \\
&+3(c_1^4+3c_1^2c_2+c_2^2+2c_1c_3+c_4)z^4+\cdots
\end{align}
On comparing \eqref{eq400} and \eqref{eq401},  the initial coefficients in terms of Schwarz function become
\begin{align}
a_2=&\frac{3}{2}c_1\label{eq1f},\\
a_3=&\frac{1}{2}(4c_1^2+c_2)\label{eq1g},\\
a_4=&\frac{1}{8}(20_1^3+13c_1c_2+2c_3)\label{eq1h}.
\end{align}
Using Lemma \eqref{eqlem3}, from \eqref{eq43} and \eqref{eq1f}, we get  $|\Gamma_1|=3|c_1|/4\leq3/4$.
Further, using \eqref{eq1f} and \eqref{eq1g} in \eqref{eq44}, we get
\begin{align*}
|\Gamma_2|&=\frac{1}{16}|-11c_1^2+4c_2|.
\end{align*}
Using Lemma \ref{eqlem3}, we get
\begin{align*}
|\Gamma_2|&\leq\frac{1}{16}(7|c_1|^2+4)\leq\frac{11}{16}.
\end{align*}
In view of \eqref{eq1f}, \eqref{eq1g},\eqref{eq1h} and \eqref{eq45}, we have
\begin{align*}
|\Gamma_3|&=\frac{1}{48}\bigg|42c_1^3-33c_1c_2+6c_3\bigg|\notag\\&\leq\frac{1}{48}(42|c_1|^3+33|c_1||c_2|+6|c_3|).
\end{align*}
By making use of Lemma \ref{eqlem3}, we get
\begin{align*}
|\Gamma_3|&\leq\frac{1}{48}\bigg(42|c_1|^3+33|c_1||c_2|+6(1-|c_1|^2-\frac{|c_2|^2}{1+|c_1|})\bigg)=\chi(|c_1|,|c_2|).
\end{align*}
Next, we find the maximum value of the function $\chi$ over the region  $\Lambda=\{(|c_1|,|c_2|): |c_1|\leq1, |c_2|\leq1-|c_1|^2 \}.$
The equation ${\partial \chi}/{\partial |c_2|}=0$ provides $|c_2|=(33|c_1|(1+|c_1|))/12\in(0,1)$   when $|c_1|\in (0,\frac{-33+\sqrt{1221}}{66}).$ 
On substituting the value of $|c_2|$ in equation 
$$\frac{\partial \chi}{\partial |c_1|}=\frac{1}{48}\bigg(126|c_1|^2-12|c_1|+33|c_2|+\frac{6|c_2|^2}{(1+|c_1|)^2}\bigg)=0,$$
we get  $6291|c_1|^2+1890|c_1|=0,$ which is not possible. Therefore, it is noted that the function $\chi$ has no  maximum value in the interior of $\Lambda.$
The continuity of the function $\chi$ over  the compact region $\Lambda$ ensure  the  maximum value of $\chi$ attains at boundary of $\Lambda.$ Therefore, we have
\begin{enumerate}
\item [(i)]$\chi(0,|c_2|)={(1-|c_2|^2)}/{8}\leq1/8,$  for all $0\leq |c_2|\leq1,$
\item[(ii)]$\chi(|c_1|,0)=({42|c_1|^3-6|c_1|^2+6})/{48}\leq{7}/{8}$ for all $0\leq |c_1|\leq1,$
\item[(iii)]$\chi(|c_1|,1-|c_1|^2)=({3|c_1|^3+39|c_1|})/48\leq{7}/{8}$ for all $0\leq |c_1|\leq1.$
\end{enumerate}
Thus, the maximum value of $\chi$ is $7/8$ in domain $\Lambda.$
\qedhere
\end{proof}

Next result provides the sharp estimate on  $|S_3|$ for the functions $f\in \mathcal{F}$. 
\begin{thm}
Let the function $f(z)=z+\sum_{n=2}^{\infty}a_nz^n$ be in the class $ \mathcal{F}$. Then  
\[|S_3|\leq3.\]
The inequality is  sharp for the  function $f_2$ given by \eqref{eqt1}.
\end{thm}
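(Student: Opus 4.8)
The plan is to work with the Schwarz-function parametrization already set up for $\mathcal{F}$ in the proof of Theorem~\ref{thm1}. Writing $1+zf''(z)/f'(z)=(1+2w(z))/(1-w(z))$ with $w(z)=\sum_{k\ge 1}c_kz^k$, the relations \eqref{eq1f} and \eqref{eq1g} give $a_2=\tfrac32 c_1$ and $a_3=\tfrac12(4c_1^2+c_2)$. Substituting these into $\mathbf{S}_3=6(a_3-a_2^2)$ from \eqref{s34} and simplifying, the $c_1^2$ terms partially cancel and one is left with the very simple expression
$$\mathbf{S}_3=6\Bigl(\tfrac12(4c_1^2+c_2)-\tfrac94 c_1^2\Bigr)=3c_2-\tfrac32 c_1^2,$$
so that $|S_3|=3\,\bigl|c_2-\tfrac12 c_1^2\bigr|$.

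From here the bound is immediate. By the triangle inequality together with the first two estimates of Lemma~\ref{eqlem3} ($|c_1|\le1$ and $|c_2|\le 1-|c_1|^2$),
$$|S_3|\le 3\Bigl(|c_2|+\tfrac12|c_1|^2\Bigr)\le 3\Bigl((1-|c_1|^2)+\tfrac12|c_1|^2\Bigr)=3-\tfrac32|c_1|^2\le 3.$$
For sharpness, equality throughout forces $|c_1|=0$ and $|c_2|=1$, i.e.\ $w(z)=z^2$; this is precisely the Schwarz function for which $1+zf''(z)/f'(z)=(1+2z^2)/(1-z^2)$, equivalently $zf''(z)/f'(z)=3z^2/(1-z^2)$, which defines the function $f_2$ of \eqref{eqt1}. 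A direct check, inserting $a_2=0$ and $a_3=\tfrac12$ into \eqref{s34}, confirms $S_3=3$ for $f_2$.

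There is essentially no obstacle here: once the substitution is carried out the only point worth a second of thought is whether the triangle-inequality step loses anything, and it does not, since all three inequalities in the displayed chain become equalities simultaneously at $c_1=0$, $|c_2|=1$. An alternative route through Lemma~\ref{lem1}, using $a_2=\tfrac34 p_1$, $a_3=\tfrac18(3p_1^2+2p_2)$ and $2p_2=p_1^2+(4-p_1^2)\xi$ to reduce $\mathbf{S}_3$ to $-\tfrac38 p_1^2+\tfrac34(4-p_1^2)\xi$ and then estimating with $|\xi|\le1$, yields the same bound $3-\tfrac38 p_1^2\le 3$ but is marginally less transparent.
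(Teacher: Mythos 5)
Your proof is correct and follows essentially the same route as the paper: express $S_3=\tfrac{3}{2}(2c_2-c_1^2)$ via \eqref{s34}, \eqref{eq1f}, \eqref{eq1g}, then apply the triangle inequality and the Schwarz-coefficient bounds $|c_1|\le 1$, $|c_2|\le 1-|c_1|^2$ from Lemma~\ref{eqlem3} to get $|S_3|\le 3-\tfrac{3}{2}|c_1|^2\le 3$. Your explicit verification of sharpness with $w(z)=z^2$ (the function $f_2$) is a small addition the paper only asserts, but it is not a different method.
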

\begin{proof}
In view of \eqref{s34}, \eqref{eq1f} and  \eqref{eq1g}, we have
\[|S_3|=\frac{3}{2}|-c_1^2+2c_2|\leq\frac{3}{2}(|c_1|^2+2|c_2|).\]
By Lemma \ref{eqlem3}, for $0\leq|c_1|\leq1$, we conclude the result as 
\[|S_3|\leq\frac{3}{2}(-|c_1|^2+2)\leq3.\qedhere\]
\end{proof}
Next, we compute the sharp bound on   $|A_3-A_2|$ and  $|\Gamma_3-\Gamma_2|$ for the functions $f\in \mathcal{F}$. 
\begin{thm}
Let the function $f(z)=z+\sum_{n=2}^{\infty}a_nz^n$ be in the class $ \mathcal{F}$. Then  
\begin{itemize}
\item [(a)] $|A_3-A_2|\leq4,$
\item [(b)] $|\Gamma_3-\Gamma_2|\leq\dfrac{25}{16}.$
\end{itemize}
Both the inequalities are sharp for the function given by \eqref{eqt2}.
\end{thm}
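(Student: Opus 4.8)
The plan is to reuse the Schwarz-function parametrization from the proof of Theorem~\ref{thm1}: write $1+zf''(z)/f'(z)=(1+2w(z))/(1-w(z))$ with $w(z)=\sum_{k\geq 1}c_kz^k$, so that $a_2,a_3,a_4$ are given by \eqref{eq1f}, \eqref{eq1g}, \eqref{eq1h}. Then $A_3-A_2$ and $\Gamma_3-\Gamma_2$ are polynomials in $c_1,c_2,c_3$, and the two bounds will follow from the triangle inequality together with Lemma~\ref{eqlem3}.

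For (a), substituting \eqref{eq1f} and \eqref{eq1g} into $A_3-A_2=-a_3+2a_2^2+a_2$ (from \eqref{eq13}) gives $A_3-A_2=\frac52 c_1^2+\frac32 c_1-\frac12 c_2$. Using $|c_2|\leq 1-|c_1|^2$ from Lemma~\ref{eqlem3},
\[
|A_3-A_2|\leq \frac52|c_1|^2+\frac32|c_1|+\frac12\bigl(1-|c_1|^2\bigr)=2|c_1|^2+\frac32|c_1|+\frac12,
\]
which is increasing in $|c_1|\in[0,1]$, hence at most $4$. Equality forces $|c_1|=1$, i.e. $w(z)=z$, which corresponds to $f_1$ in \eqref{eqt2}.

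For (b), from \eqref{eq44} and \eqref{eq45} one has $\Gamma_3-\Gamma_2=\frac12\bigl(-a_4+4a_2a_3-\frac{10}{3}a_2^3+a_3-\frac32 a_2^2\bigr)$; substituting \eqref{eq1f}--\eqref{eq1h} and collecting terms, I expect
\[
\Gamma_3-\Gamma_2=-\frac18 c_3+\frac{11}{16}c_1c_2+\frac14 c_2-\frac78 c_1^3-\frac{11}{16}c_1^2.
\]
Taking absolute values, bounding $|c_2|$ and $|c_3|$ via Lemma~\ref{eqlem3}, and setting $x=|c_1|$, $y=|c_2|$, the task reduces to maximizing
\[
\psi(x,y)=\frac18\Bigl(1-x^2-\frac{y^2}{1+x}\Bigr)+\Bigl(\frac{11}{16}x+\frac14\Bigr)y+\frac78 x^3+\frac{11}{16}x^2
\]
over $\Lambda=\{(x,y):0\leq x\leq 1,\ 0\leq y\leq 1-x^2\}$, exactly in the spirit of the optimization over $\Lambda$ in Theorem~\ref{thm1}. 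Since $\partial\psi/\partial y=\frac{11}{16}x+\frac14-\frac{y}{4(1+x)}$ decreases in $y$ and equals $\frac{15}{16}x\geq 0$ at $y=1-x^2$, the function $\psi$ is nondecreasing in $y$ on $\Lambda$, so its maximum lies on the edge $y=1-x^2$. On that edge $\psi$ collapses to the cubic $\frac1{16}x^3+\frac7{16}x^2+\frac{13}{16}x+\frac14$, increasing on $[0,1]$, with value $\frac{25}{16}$ at $x=1$; this is again realized by $w(z)=z$, i.e. $f_1$.

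The one step that is not routine bookkeeping is the assertion that $\psi$ attains its maximum on the edge $y=1-x^2$ rather than at an interior critical point; this is precisely the monotonicity of $\psi$ in $y$ noted above, since the interior critical value $y=\frac{11}{4}x^2+\frac{15}{4}x+1$ always exceeds $1-x^2$. With that reduction, the remaining one-variable estimates in both parts are immediate, and the sharpness is witnessed in each case by the common extremal function $f_1$ of \eqref{eqt2}.
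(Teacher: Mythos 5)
Your proposal is correct and follows essentially the same route as the paper: the same Schwarz-function coefficient formulas \eqref{eq1f}--\eqref{eq1h}, the same expressions $A_3-A_2=\tfrac12(5c_1^2+3c_1-c_2)$ and $\Gamma_3-\Gamma_2=\tfrac1{48}(-42c_1^3+33c_1c_2-33c_1^2+12c_2-6c_3)$, and the same majorant over $\Lambda$ (your $\psi$ is exactly the paper's $M$). Your observation that $\partial\psi/\partial y\geq 0$ on $\Lambda$, so the maximum sits on the edge $y=1-x^2$, is just a slightly streamlined version of the paper's check that $\partial M/\partial|c_2|\neq 0$ in the interior together with the three boundary edges, and both yield the sharp values $4$ and $25/16$ attained by $f_1$.
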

\begin{proof}
(a) Using   \eqref{eq13}, \eqref{eq1f} and \eqref{eq1g}, we have
\[|A_3-A_2|=\frac{1}{2}|5c_1^2+3c_1-c_2|.\]
On applying Lemma \eqref{eqlem3}, we get
\begin{align*}
|A_3-A_2|\leq\frac{1}{2}(4|c_1|^2+3|c_1|+1)\leq4, \quad |c_1|\leq1.
\end{align*}
(b)  In view of \eqref{eq44}, \eqref{eq45}, \eqref{eq1f}, \eqref{eq1g} and \eqref{eq1h}, we have
$$|\Gamma_3-\Gamma_2|=\frac{1}{48}\bigg|(-42c_1^3+33c_1c_2-33c_1^2+12c_2-6c_3)\bigg|.$$
Using Lemma \eqref{eqlem3}, we get
\begin{align*}
|\Gamma_3-\Gamma_2|&\leq\frac{1}{48}\bigg(42|c_1|^3+33|c_1| |c_2|+33|c_1|^2+12|c_2|+6(1-|c_1|^2-\frac{|c_2|^2}{1+|c_1|})\bigg)\notag\\&=M(|c_1|,|c_2|).
\end{align*}
Next, we find the maximum value of the function $M$ over the region  $\Lambda=\{(|c_1|,|c_2|): |c_1|\leq1, |c_2|\leq1-|c_1|^2 \}.$ First, let us consider  the boundary of $\Lambda$. We have
\begin{enumerate}
\item [(i)]$M(0,|c_2|)={(-|c_2|^2+2|c_2|+1)}/{8}\leq1/4,$\quad $0\leq |c_2|\leq1,$
\item[(ii)]$M(|c_1|,0)=({42|c_1|^3+27|c_1|^2+6})/{48}\leq{25}/{16},$\quad $0\leq |c_1|\leq1,$
\item[(iii)]$M(|c_1|,1-|c_1|^2)=({3|c_1|^3+21|c_1|^2+39|c_1|+12})/48\leq{25}/{16}$,\quad $0\leq |c_1|\leq1.$
\end{enumerate}
It is noted  that ${\partial M}/{\partial |c_2|}=\bigg(33|c_1|+12(1-{|c_2|}/{(1+|c_1|)})\bigg)/48\neq0$ in the interior of $\Lambda$. Thus, the function $M$ has no critical point in the interior of $\Lambda$. Thus, we conclude the result.\qedhere
\end{proof}

\section{\textbf{The class $ \mathcal{G}$} }
In this section,  we first do investigation about the  sharpness of the  bounds on second order Hermitian-Toeplitz determinant $T_{2,1}(F_{f}/\gamma)$ for the functions $f\in \mathcal{G}$. 
\begin{thm}
Let $f(z)=z+\sum_{n=2}^{\infty}a_nz^n$ be in  the class  $\mathcal{G}.$ Then 
$$-\frac{1}{144}\leq T_{2,1}(	F_{f}/\gamma)\leq\frac{15}{256}.$$
The upper and lower bound are sharp for the function $g_1$ and $g_2$  given  by \eqref{eqt3} and \eqref{eqt4} respectively.
\end{thm}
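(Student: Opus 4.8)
The plan is to mirror the structure used for the class $\mathcal{F}$ in the first theorem of Section~2. First I would translate the defining subordination $1+zf''(z)/f'(z)\prec(1-2z)/(1-z)$ into a Carath\'eodory representation: writing $1+zf''(z)/f'(z)=(1-2w(z))/(1-w(z))$ for a Schwarz function $w$, and then setting $p(z)=(1+w(z))/(1-w(z))\in\mathcal{P}$, one gets an identity of the shape $1+zf''(z)/f'(z)=(3-p(z))/2$ (the analogue of \eqref{eq201}). Comparing Taylor coefficients with \eqref{eq400} gives $a_2,a_3$ (and, if needed later, $a_4$) as explicit linear combinations of $p_1,p_2$ (resp.\ $p_3$); I expect $a_2=-\tfrac34 p_1$ and $a_3=\tfrac18(3p_1^2-2p_2)$ or similar, differing from \eqref{eq1}--\eqref{eq1a} only by signs coming from the $-2z$ in the numerator.

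Next I would substitute these expressions for $a_2,a_3$ into the formula \eqref{eq105} for $T_{2,1}(F_f/\gamma)$, then apply Lemma~\ref{lem1} to write $2p_2=p_1^2+t\xi$ with $t=4-p_1^2$ and $\xi\in\overline{\mathbb{D}}$. This reduces $T_{2,1}(F_f/\gamma)$ to a function of $p_1$ and $\RE\xi$ (the imaginary part of $\xi$ and all higher data drop out because only $\RE a_3$, $|a_3|^2$, and powers of $a_2$ appear, and $a_2$ is real up to rotation). Using $-\RE\xi\le|\xi|$ for the upper bound and $-\RE\xi\ge-|\xi|$ for the lower bound, and writing $x=|\xi|\in[0,1]$, $p=p_1\in[0,2]$, I obtain two two-variable polynomial functions $\Upsilon(p,x)$ and $\Psi(p,x)$ on the rectangle $\Omega=[0,2]\times[0,1]$, exactly as in the first proof. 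The upper bound $15/256$ and lower bound $-1/144$ should then follow from a boundary-plus-interior analysis of these functions: evaluate on the four edges $p=0$, $p=2$, $x=0$, $x=1$, and check that no interior critical point of $\Upsilon$ (resp.\ no interior zero of $\partial\Psi/\partial x$, if $\Psi$ is monotone in $x$) yields a larger (resp.\ smaller) value.

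The main obstacle I anticipate is purely computational bookkeeping rather than conceptual: getting the signs right in the coefficient identities and then correctly identifying which edge of $\Omega$ carries the extremum, and verifying that the interior critical-point equation has no admissible root in $(0,2)\times(0,1)$. In particular the value $-1/144$ strongly suggests the lower bound is attained at $p=2/\sqrt3$ or some similar interior-of-an-edge point rather than at a corner, so the edge analysis (likely on $x=1$ or $x=0$, via a one-variable calculus argument) will need to be done carefully rather than just by checking corners; the denominator $144$ pinpoints $p^2$ to a rational value where a quadratic-in-$p^2$ attains its min. Finally, for sharpness I would verify directly that the function $g_1$ in \eqref{eqt3} realizes the upper bound (it corresponds to $w(z)=z$, i.e.\ $p_1=2$, the corner giving $\Upsilon(2,x)=15/256$) and that $g_2$ in \eqref{eqt4} realizes the lower bound (it corresponds to $w(z)=z^2$, i.e.\ $p_1=0$ with $\xi$ of modulus $1$, giving $\Psi(0,1)=-1/16$\,—\,here I would double-check, since $-1/16\neq-1/144$, meaning the extremal configuration for the lower bound is more subtle and the sharpness claim for $g_2$ must be reconciled with the actual minimizing $(p,x)$). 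This reconciliation is the one genuinely delicate point, and I would resolve it by recomputing $T_{2,1}(F_{g_2}/\gamma)$ from the explicit series \eqref{eqt4} to confirm the stated value.
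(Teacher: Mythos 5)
Your plan is the same as the paper's: pass to $p=(1+w)/(1-w)$, get $1+zf''/f'=(3-p(z))/2$, express $a_2,a_3$ through $p_1,p_2$, substitute into \eqref{eq105}, invoke Lemma \ref{lem1} to write $2p_2=p_1^2+(4-p_1^2)\xi$, and run a boundary-plus-interior analysis of the resulting two functions of $(p,x)$ on $[0,2]\times[0,1]$. So the method is sound and matches the paper exactly; the only substantive issue is the anticipated coefficients, and the confusion it causes you at the end.

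Concretely: since the $z$-coefficient of $(3-p(z))/2$ is $-p_1/2$ (not $-3p_1/2$), comparison with \eqref{eq400} gives $a_2=-p_1/4$ and $a_3=\tfrac{1}{24}(p_1^2-2p_2)$, as in \eqref{eq1j}--\eqref{eq1k}, not $a_2=-\tfrac34 p_1$, $a_3=\tfrac18(3p_1^2-2p_2)$. With the correct values the overall prefactor in the analogue of \eqref{eq4} is $1/36864$ rather than $1/4096$ (see \eqref{eq4a}), and the lower-bound function satisfies $N(0,x)=-x^2/144$. Hence the minimum $-1/144$ is attained at the corner $(p,x)=(0,1)$, which corresponds to $w(z)=z^2$, i.e.\ exactly $g_2$ in \eqref{eqt4} (indeed $a_2=0$, $a_3=-1/6$ gives $T_{2,1}(F_{g_2}/\gamma)=-\tfrac{4}{16}\cdot\tfrac1{36}=-\tfrac1{144}$ directly from \eqref{eq105}). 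So the ``delicate reconciliation'' you flag is an artifact of the wrong scaling: there is no interior-of-edge minimizer near $p=2/\sqrt3$, and $\partial N/\partial x<0$ on the open rectangle rules out interior critical points, just as in the $\mathcal{F}$ case. The upper bound works as you describe: the edge $p=2$ gives the constant $15/256$, attained by $g_1$ ($w(z)=z$, $p_1=2$), and the interior critical-point equation $p^5-8p^3+16p=0$ has no root in $(0,2)$. Your stated fallback, recomputing $T_{2,1}(F_{g_2}/\gamma)$ from the series \eqref{eqt4}, would indeed have exposed and fixed the coefficient slip, so the proposal is correct in outline once that bookkeeping is repaired.
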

\begin{proof}
Let the function $f\in\mathcal{G}.$  Then
$1+\frac{zf''(z)}{f'(z)}=\frac{1-2w(z)}{1-w(z)}$
where $w(z)$ is a Schwarz function defined on $\mathbb{D}.$ Since
$p(z)={(1+w(z))}/{(1-w(z))}\in\mathcal{P},$ then we have
\begin{equation}\label{eq201a}
1+\frac{zf''(z)}{f'(z)}=\frac{3-p(z)}{2}.
\end{equation}
The  series expansion of right hand side of   \eqref{eq201a} becomes
\begin{align}\label{eq407}
\frac{3-p(z)}{2}=1-\frac{p_1z}{2} -\frac{p_2z^2}{2}-\frac{p_3z^3}{2}-\frac{p_4z^4}{2}\cdots
\end{align}
On comparing \eqref{eq400} and \eqref{eq407}, we get
\begin{align}
a_2=&-\frac{p_1}{4}\label{eq1j},\\
a_3=&\frac{1}{24}(p_1^2-2p_2)\label{eq1k},\\
a_4=&\frac{1}{192}(-p_1^3+6p_1p_2-8p_3)\label{eq1m}.
\end{align}
On putting the values of $a_2$ and $a_3$  from  \eqref{eq1j} and \eqref{eq1k} in \eqref{eq105}, we have
\begin{align*}
T_{2,1}(	F_{f}/\gamma)=\frac{1}{16}\bigg(-\frac{p_1^4}{256}+\frac{p_1^2}{4}+\frac{p_1^2}{96}\Re(p_1^2-2p_2)-\frac{1}{144}|p_1^2-2p_2|^2\bigg).
\end{align*}
Using Lemma \ref{lem1}, above expression becomes 
\begin{equation}\label{eq4a}
T_{2,1}(	F_{f}/\gamma))=\frac{1}{36864}(-9p_1^4+576p_1^2-24p_1^2(4-p_1^2)\Re(\xi)-16(4-p_1^2)^2|\xi|^2).
\end{equation}
Using  $-\Re(\xi)\leq|\xi|,$ and  setting $x=|\xi|\in[0,1]$ (with $p_1=p$), we get 
\begin{align*}
T_{2,1}(	F_{f}/\gamma)&\leq\frac{1}{36864}(-9p_1^4+576p_1^2+24p_1^2(4-p_1^2)x-16(4-p_1^2)^2x^2)=\Phi(p,x).
\end{align*}
In order to prove our result, we determine the maximum value of the function $\Phi$ over  rectangular region $\Omega=[0,2]\times[0,1].$ 
\begin{itemize}
\item[(1)] At the boundary points of $\Omega,$ It is noted that 
\begin{align*}
 &\Phi(0,x)=-\frac{x^2}{144}\leq0,\quad\Phi(2,x)=\frac{15}{256},\quad \Phi(p,0)=\frac{1}{36864}(-9p^4+576p^2)\leq\frac{1   5}{256},\\&\mbox{and} \,\,\Phi(p,1)=\frac{1}{36864}(-49p^4+800p^2-256)\leq\frac{15}{256}.
\end{align*}
\item[$(2)$]To compute the critical points of the function  $\Phi,$ the system of equations  ${\partial \Phi}/{\partial p}=0$ and ${\partial \Phi}/{\partial x}=0$ must have a solution in the interior of $\Omega.$ The equation   ${\partial \Phi}/{\partial x}=0$ gives $x=3p^2/(4(4-p^2))=x_p\in(0,1)$ which holds for $p<{\sqrt{16/7}}\in(0,2).$ Next, on putting the value of $x_p$ in equation ${\partial \Phi}/{\partial p}=0$, we obtain the equation $p^5-8p^3+16p=0$ which is not possible for any $p\in(0,2)$.
Therefore, the function $\Phi$ has no maximum value in the interior of $\Omega.$ 
\end{itemize}
Hence, from  cases (1) and (2), we  conclude that the best upper bound on $T_{2,1}(	F_{f}/\gamma)$ is ${15}/{256}.$
Next,  on using  $-\Re(\xi)\geq-|\xi|$ and setting $x=|\xi|$  in  \eqref{eq4a},  we get
\begin{align*}
T_{2,1}(F_{f}/\gamma)&\geq\frac{1}{36864}(-9p^4+576p^2-24p^2(4-p^2)x-16(4-p^2)^2x^2)=N(p,x).
\end{align*}
Next, we determine the minimum value of the function $N(p,x)$ over  rectangular region $\Omega=[0,2]\times[0,1].$ 
First, we consider boundary points  of $\Omega$.
\begin{align*}
 &N(0,x)=-\frac{x^2}{144}\geq-\frac{1}{144},\quad N(2,x)=\frac{15}{256},\quad N(p,0)=\frac{1}{36864}(-9p^4+576p^2)\geq0,\\&\mbox{and}\,\,N(p,1)=\frac{1}{36864}(-p^4+608p^2-256)\geq-\frac{1}{144}.
\end{align*}
Since   ${\partial N}/{\partial x}={-((4-p^2)(24p^2+32x(4-p^2)))}/36864\neq0$ for all $(p,x)\in(0,2)\times(0,1).$  Thus, the function $N$ has no minimum value in the interior of $\Omega.$ 
Therefore,  we conclude the minimum value of function $N$ over $\Omega$ is $-1/144.$\qedhere 
\end{proof}
Next, we compute sharp bound on initial logarithmic inverse coefficients $|\Gamma_i|$; $i=1,2,3$ for the functions $f\in \mathcal{G}$. 
\begin{thm} Let $f(z)=z+\sum_{n=2}^{\infty}a_nz^n$ be in the class  $\mathcal{G}.$ Then 
$$|\Gamma_1|\leq\frac{1}{4},\quad|\Gamma_2|\leq\frac{3}{16},\quad|\Gamma_3|\leq\frac{5}{24}.$$
All three inequalities  are sharp  for the function $g_1$ given by \eqref{eqt3}.
\end{thm}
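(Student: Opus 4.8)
The plan is to mirror the structure of Theorem~\ref{thm1} (the analogous result for $\mathcal{F}$), replacing the subordinating function. First I would start from the representation for $f\in\mathcal{G}$, namely $1+zf''(z)/f'(z)=(1-2w(z))/(1-w(z))$ with $w(z)=\sum_{k\ge1}c_kz^k$ a Schwarz function, expand the right-hand side in powers of $z$, and compare with the expansion \eqref{eq400} of $1+zf''(z)/f'(z)$ to obtain the initial coefficients $a_2,a_3,a_4$ in terms of $c_1,c_2,c_3$. (These should look like $a_2=-\tfrac12 c_1$, with $a_3$ a combination of $c_1^2,c_2$ and $a_4$ a combination of $c_1^3,c_1c_2,c_3$; the exact rationals come out of the comparison just as in \eqref{eq1f}--\eqref{eq1h}.) From here everything is plugging these into the logarithmic-inverse-coefficient formulas \eqref{eq43}, \eqref{eq44}, \eqref{eq45} and bounding with Lemma~\ref{eqlem3}.

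For $|\Gamma_1|$: from \eqref{eq43}, $|\Gamma_1|=\tfrac12|a_2|=\tfrac14|c_1|\le\tfrac14$ immediately by $|c_1|\le1$. For $|\Gamma_2|$: substitute into \eqref{eq44} to get $|\Gamma_2|$ as a constant times $|\alpha c_1^2+\beta c_2|$ for suitable rationals $\alpha,\beta$; apply the triangle inequality and then $|c_2|\le 1-|c_1|^2$ from Lemma~\ref{eqlem3}, which turns the bound into a quadratic in $|c_1|$ on $[0,1]$ whose maximum (at $|c_1|=0$, if the coefficient of $|c_1|^2$ is negative, or at $|c_1|=1$ otherwise) is $\tfrac{3}{16}$. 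For $|\Gamma_3|$: substitute into \eqref{eq45} to get $|\Gamma_3|\le C(a|c_1|^3+b|c_1||c_2|+c|c_3|)$, then use the full strength of Lemma~\ref{eqlem3}, i.e. $|c_3|\le 1-|c_1|^2-|c_2|^2/(1+|c_1|)$, to reduce to a two-variable function $\chi(|c_1|,|c_2|)$ on the region $\Lambda=\{|c_1|\le1,\ |c_2|\le 1-|c_1|^2\}$, exactly as in the proof of Theorem~\ref{thm1}. Then I would check the interior critical points (the equation $\partial\chi/\partial|c_2|=0$ should, as there, lead to an inconsistency or push $|c_2|$ out of range) and the three boundary pieces $|c_1|=0$, $|c_2|=0$, $|c_2|=1-|c_1|^2$, on each of which $\chi$ is a one-variable polynomial easily maximized, yielding $\tfrac{5}{24}$.

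Finally I would verify sharpness: the function $g_1$ in \eqref{eqt3} arises from $zg_1''/g_1'=(1-2z)/(1-z)$, i.e. the choice $w(z)=z$ (so $c_1=1$, $c_k=0$ for $k\ge2$); I would compute its coefficients and check that each of the three inequalities becomes an equality for $g_1$, confirming the constants $\tfrac14,\tfrac3{16},\tfrac5{24}$ are best possible.

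The routine part is the arithmetic of the coefficient comparison and the one-variable optimizations; the only mildly delicate step is the two-variable analysis for $|\Gamma_3|$ — making sure the interior of $\Lambda$ genuinely contains no maximizer (so that the supremum is attained on the boundary) and correctly identifying which boundary edge delivers $\tfrac{5}{24}$. I expect this to go through verbatim along the lines of the $\mathcal{F}$ case, since the structure of the extremal configuration ($c_1=1$, higher $c_k=0$) is the same; the main obstacle, such as it is, is just bookkeeping the different rational coefficients coming from the $(1-2w)/(1-w)$ expansion rather than $(1+2w)/(1-w)$.
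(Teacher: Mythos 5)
Your proposal follows the paper's proof essentially verbatim: derive $a_2=-c_1/2$, $a_3=-c_2/6$, $a_4=-\tfrac{1}{24}(c_1c_2+2c_3)$ by comparing the expansion of $(1-2w)/(1-w)$ with \eqref{eq400}, plug into \eqref{eq43}--\eqref{eq45}, bound via Lemma~\ref{eqlem3} (for $|\Gamma_2|$ only $|c_2|\le 1-|c_1|^2$ is needed, and the resulting quadratic is maximized at $|c_1|=1$ giving $3/16$), and for $|\Gamma_3|$ reduce to a two-variable function on $\Lambda$ whose maximum $5/24$ is attained on the boundary edges, with sharpness from $g_1$ (i.e.\ $c_1=1$, $c_k=0$ for $k\ge2$), exactly as in the paper. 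This is the same approach, and your outlined steps are correct.
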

\begin{proof}
Since the function  $f\in\mathcal{G},$  then there exist a Schwarz function $w(z)=\sum_{{k=1}}^{\infty}c_kz^k$ in  $\vert z\vert <1$ such that
$1+\frac{zf''(z)}{f'(z)}=\frac{1-2w(z)}{1-w(z)}.$
A simple calculation yields
\begin{align}\label{eq405}
\frac{1-2w(z)}{1-w(z)}=1&+3c_1z+3(c_1^2+c_2)z^2+3(c_1^3+2c_1c_2+c_3)z^3\notag\\&+3(c_1^4+3c_1^2c_2+c_2^2+2c_1c_3+c_4)z^4+\cdots
\end{align}
On comparing  \eqref{eq400} and \eqref{eq405}, the initial coefficients become
\begin{align}
a_2=&-\frac{c_1}{2}\label{eq1u},\\
a_3=&-\frac{c_2}{6}\label{eq1v},\\
a_4=&-\frac{1}{24}(c_1c_2+2c_3)\label{eq1w}.
\end{align}
In view of \eqref{eq43} and \eqref{eq1u}, we have $|\Gamma_1|=|c_1|/4\leq1/4$. Further, from 
  \eqref{eq1u}, \eqref{eq1v} and  \eqref{eq44}, we have
\begin{align*}
|\Gamma_2|&=\frac{1}{48}|9c_1^2+4c_2|.
\end{align*}
By Lemma \eqref{eqlem3}, we get
$
|\Gamma_2|\leq(5|c_1|^2+4)/48\leq {3}/{16}.
$
Next, in view of \eqref{eq1u}, \eqref{eq1v},\eqref{eq1w} and \eqref{eq45}, we have
\begin{align*}
|\Gamma_3|&=\frac{1}{48}\bigg|10c_1^3+9c_1c_2+2c_3\bigg|\notag\\&\leq\frac{1}{48}(10|c_1|^3+9|c_1||c_2|+2|c_3|).
\end{align*}
Using Lemma \eqref{eqlem3}, we get
\begin{align*}
|\Gamma_3|&\leq\frac{1}{48}\bigg(10|c_1|^3+9|c_1||c_2|+2(1-|c_1|^2-\frac{|c_2|^2}{1+|c_1|})\bigg)\notag=S(|c_1|,|c_2|).
\end{align*}
To determine the maximum value of the function $S$ over the region  $\Lambda=\{(|c_1|,|c_2|): |c_1|\leq1, |c_2|\leq1-|c_1|^2 \},$  we  consider following cases
\begin{itemize}
\item [$(1)$] On  the boundary of $\Lambda$, we have
\begin{enumerate}
\item [(i)]$S(0,|c_2|)={(1-|c_2|^2)}/{24}\leq1/24,$  for all $0\leq |c_2|\leq1,$
\item[(ii)]$S(|c_1|,0)=({10|c_1|^3-2|c_1|^2+2})/{48}\leq{5}/{24}$ for all $0\leq |c_1|\leq1,$
\item[(iii)]$S(|c_1|,1-|c_1|^2)=({-|c_1|^3+11|c_1|})/48\leq{5}/{24}$ for all $0\leq |c_1|\leq1.$
\end{enumerate}
\item [$(2)$] We take the interior region of  $\Lambda$. A simple calculation gives 
 $$\frac{\partial S}{\partial |c_1|}=\frac{1}{48}\bigg(30|c_1|^2-4|c_1|+9|c_2|+\frac{2|c_2|^2}{(1+|c_1|)^2}\bigg)=0,$$ and $$\frac{\partial S}{\partial |c_2|}=\frac{1}{48}\bigg(9|c_1|-\frac{4|c_2|}{1+|c_1|}\bigg)=0.\quad$$
Using similar lines done in  Theorem \ref{thm1}, we conclude that the system of equations ${\partial S}/{\partial |c_1|}=0$ and ${\partial S}/{\partial |c_2|}=0$  has no common solution in the interior of $\Lambda$. Hence, the function $S$ has no maximum value in the interior of $\Lambda.$ 
\end{itemize}
From the cases $(1)$ and $(2)$, we obtain desired estimate  on $|\Gamma_3|$.\qedhere
\end{proof}

Next result provides the sharp bounds on third and fourth order Schwarzian derivatives $|S_3|$ and $|S_4|$ respectively for the functions $f\in \mathcal{G}$. 
\begin{thm}
Let the function $f(z)=z+\sum_{n=2}^{\infty}a_nz^n$ be in  the class  $\mathcal{G}.$ Then
\[|S_3|\leq\frac{3}{2}\,\,\,\mbox{and}\,\,\, |S_4|\leq6.\]
The both the inequalities are sharp for the function given by \eqref{eqt3}.
\end{thm}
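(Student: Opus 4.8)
The plan is to push everything through the Schwarz function. Writing $1+zf''(z)/f'(z)=(1-2w(z))/(1-w(z))$ with $w(z)=\sum_{k\ge1}c_kz^k$, exactly as in the previous theorem, the relations \eqref{eq1u}, \eqref{eq1v}, \eqref{eq1w} combined with \eqref{s34} give
\[
\textbf{S}_3=-c_2-\tfrac{3}{2}c_1^2,\qquad \textbf{S}_4=-6c_1^3-7c_1c_2-2c_3 .
\]
The bound on $|S_3|$ is then immediate from the triangle inequality and $|c_2|\le1-|c_1|^2$ of Lemma~\ref{eqlem3}:
\[
|S_3|\le |c_2|+\tfrac{3}{2}|c_1|^2\le(1-|c_1|^2)+\tfrac{3}{2}|c_1|^2=1+\tfrac12|c_1|^2\le\tfrac32 .
\]

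For $|S_4|$ I would apply the triangle inequality and then the third estimate of Lemma~\ref{eqlem3} to get
\[
|S_4|\le 6|c_1|^3+7|c_1||c_2|+2\Bigl(1-|c_1|^2-\tfrac{|c_2|^2}{1+|c_1|}\Bigr)=:\Xi(|c_1|,|c_2|),
\]
and then maximize $\Xi$ over the compact set $\Lambda=\{(x,y):0\le x\le1,\ 0\le y\le 1-x^2\}$, $x=|c_1|$, $y=|c_2|$, by the same planar optimization used repeatedly above. In the interior there is no maximum: $\partial\Xi/\partial y=0$ forces $y=\tfrac{7}{4}x(1+x)$, and inserting this into $\partial\Xi/\partial x=7y-4x+18x^2+2y^2/(1+x)^2$ leaves a quantity that is strictly positive for $x\in(0,1)$, so the gradient never vanishes inside $\Lambda$. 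On the boundary one handles the three edges separately: $\Xi(0,y)=2-2y^2\le2$; $\Xi(x,0)=2-2x^2+6x^3$ has maximum $6$ on $[0,1]$ at the endpoint $x=1$; and, using $(1-x^2)^2/(1+x)=(1-x)^2(1+x)$, the last edge collapses to $\Xi(x,1-x^2)=9x-3x^3$, which is increasing on $[0,1]$ and hence at most $6$. Assembling the cases yields $|S_4|\le6$.

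Finally, both estimates are attained by $w(z)=z$ (so $c_1=1$, $c_k=0$ for $k\ge2$), which is the Schwarz function associated with $g_1$ in \eqref{eqt3}; there $\textbf{S}_3=-\tfrac32$ and $\textbf{S}_4=-6$, so the constants $3/2$ and $6$ are best possible. The only step demanding any attention is the two-variable maximization of $\Xi$ — specifically ruling out an interior critical point and recognizing $y=1-x^2$ as the extremal edge — but this is routine and entirely in the spirit of the computations already carried out for the class $\mathcal{F}$ and for $|\Gamma_3|$ in $\mathcal{G}$.
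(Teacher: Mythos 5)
Your proposal is correct and follows essentially the same route as the paper: the same Schwarz-coefficient expressions $S_3=-\tfrac32c_1^2-c_2$ and $S_4=-6c_1^3-7c_1c_2-2c_3$, the triangle inequality together with Lemma \ref{eqlem3}, and the identical two-variable maximization over $\Lambda$ (your interior computation reduces to the paper's $291|c_1|^2+66|c_1|>0$, and your edge values $2-2y^2$, $6x^3-2x^2+2$, $9x-3x^3$ match the paper's). Your explicit sharpness check via $w(z)=z$ (i.e.\ $g_1$) is a welcome addition but not a different method.
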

\begin{proof}
(a) In view of \eqref{eq1u}, \eqref{eq1v} and \eqref{eq1g}, we have
\[|S_3|=\frac{1}{2}|3c_1^2+2c_2|\leq\frac{1}{2}(3|c_1|^2+2|c_2|).\]
Using Lemma \ref{eqlem3}, above expression becomes
\[|S_3|\leq\frac{1}{2}(|c_1|^2+2)\leq\frac{3}{2},\quad0\leq|c_1|\leq1.\]

(b) In view of \eqref{eq1u}, \eqref{eq1v} and \eqref{eq1g}, we get 
\[|S_4|=|6c_1^3+7c_1c_2+2c_3|\leq(6|c_1|^3+7|c_1| |c_2|+2|c_3|).\]
From Lemma \ref{eqlem3}, the above expression becomes
\begin{align*}
|S_4|&\leq6|c_1|^3+7|c_1| |c_2|+2(1-|c_1|^2-\frac{|c_2|^2}{1+|c_1|})\notag=\delta(|c_1|,|c_2|).
\end{align*}
Next, we find the maximum value of the function $\delta$ over the region  $\Lambda=\{(|c_1|,|c_2|): |c_1|\leq1,|c_2|\leq1-|c_1|^2 \}.$ 
On the boundary of $\Lambda$, it is noted that
\begin{enumerate}
\item [(i)]$\delta(0,|c_2|)={2(1-|c_2|^2)}\leq2,$  for all $0\leq |c_2|\leq1,$
\item[(ii)]$\delta(|c_1|,0)=({6|c_1|^3-2|c_1|^2+2})\leq6$ for all $0\leq |c_1|\leq1,$
\item[(iii)]$\delta(|c_1|,1-|c_1|^2)=({-3|c_1|^3+9|c_1|})\leq6$ for all $0\leq |c_1|\leq1.$
\end{enumerate}
Next, we consider the interior of $\Lambda$.  The equation  ${\partial \delta}/{\partial |c_2|}=0$  gives $|c_2|=(7|c_1|(1+|c_1|))/4\in(0,1)$ which is true for  $|c_1|\in (0,\frac{-7+\sqrt{161}}{14}).$ Further, we solve $$\frac{\partial \delta}{\partial |c_1|}=\bigg(18|c_1|^2-4|c_1|+7|c_2|+\frac{2|c_2|^2}{(1+|c_1|)^2}\bigg)=0,$$ and substituting the values of $|c_2|$ in the previous equation  which gives $291|c_1|^2+66|c_1|=0,$ which is not possible. Therefore, the function $\delta$ has no maximum value in the interior of $\Lambda.$\qedhere
\end{proof}
\section*{Acknowledgement}
The first and the third author express their thanks to the Institute of Eminence, University of Delhi, Delhi, India--110007 for providing financial support for this research under grant number-Ref. No./IoE/2023-24/12/FRP. The second author would like to thank to  the UGC Non-NET Fellowship for supporting financially vide Ref. No. Sch/139/Non-NET/Ext-156/2022-2023/722.

\end{document}